\newif\ifpdf
\newtheorem{theorem}{Theorem}
\newtheorem{lemma}{Lemma}
\newtheorem*{definition}{Definition}
\def\d{{\rm d\,}}
\def\supp{{\rm supp\,}}
\def\ZZ{\ensuremath{\mathbb Z}}
\def\ZI{\ensuremath{\mathbb I}}
\def\ZN{\ensuremath{\mathbb N}}
\def\ZT{\ensuremath{\mathbb T}}
\def\ZR{\ensuremath{\mathbb R}}
\def\md#1#2\emd{\ifx0#1
\begin{equation*} #2 \end{equation*}\fi  %  single line display, no number
\ifx1#1\begin{equation}#2\end{equation}\fi   % single line display, number
\ifx2#1\begin{align*}#2\end{align*}\fi   % aligned display, no number
\ifx3#1\begin{align}#2\end{align}\fi    % aligned display, number
\ifx4#1\begin{gather*}#2\end{gather*}\fi  % multline, not align, no number
\ifx5#1\begin{gather}#2\end{gather}\fi   % multinline, not align
\ifx6#1\begin{multline*}#2\end{multline*}\fi  %  display too long for one line
\ifx7#1\begin{multline}#2\end{multline}\fi  % as above, with numbers
\ifx8#1\begin{multline*}\begin{split}#2\end{split}\end{multline*}\fi
\ifx9#1\begin{multline}\begin{split}#2\end{split}\end{multline}\fi
}
\newcommand {\e }[1]{(\ref{#1})}
\newcommand {\lem }[1]{Lemma \ref{#1}}
\newcommand {\trm }[1]{Theorem \ref{#1}}
\begin{document}

\title{On the sweeping out property for convolution operators of discrete measures}%
\author{G. A. Karagulyan}%
\address{Institute of Mathematics of Armenian
National Academy of Sciences, Baghramian Ave.- 24b, 0019,
Yerevan, Armenia}%
\email{g.karagulyan@yahoo.com}%

\subjclass[2000]{42B25}%
\keywords{discrete measures, bounded entropy theorem, sweeping out property, Bellow problem}%

%\date{}%
%\dedicatory{}%
%\commby{}%
% ----------------------------------------------------------------
\begin{abstract}
Let $\mu_n$ be a sequence of discrete measures on the unit $\ZT=\ZR/\ZZ$ with $\mu_n(0)=0$, and $\mu_n((-\delta,\delta))\to 1$, as $n\to\infty$.  We prove that the sequence of convolution operators $(f\ast\mu_n)(x)$ is strong sweeping out, i.e. there exists a set $E\subset\ZT$ such that
\md0
\lim\sup_{n\to\infty}(\ZI_E\ast\mu_n)(x)= 1,\quad \lim\inf_{n\to\infty}(\ZI_E\ast\mu_n)(x)= 0,
\emd
almost everywhere on $\ZT$.
\end{abstract}
\maketitle
% ----------------------------------------------------------------
\begin{section}{Introduction}
We consider bounded discrete measures
\md0
\mu=\sum_k m_k\delta_{x_k},\quad \sum_km_k<\infty,
\emd
on the circle $\ZT=\ZR/\ZZ$, where $X=\{x_k\}$ is a finite or countable set in $\ZT $ and $\delta_{x_k}$ is Dirac measure at $x_k$. Denote
\md0
S_\mu f(x)=\int_\ZR f(x+t)d\mu(t).
\emd
Let $\mu_n$ be a sequence of discrete measures satisfying
\md1\label{1}
\mu_n(0)=0 ,\quad
\mu_n((-\delta,\delta))\to 1,\hbox{ as }n\to\infty ,
\emd
for any $0<\delta\le 1/2$. It is clear if $f\in L^1(\ZT)$ is continuous at $x\in\ZT$ then
\md1\label{4}
S_{\mu_n}f(x)\to f(x),
\emd
and the convergence is uniformly if $f\in C(\ZT)$. The almost everywhere convergence problem in the case of general $f\in L^1(\ZT)$ is not trivial. J.~Bourgain in \cite{Bour} proved
\begin{theorem}[J. Bourgain]\label{T1}
If $x_k\searrow 0$ as $k\to\infty $, and
\md0
\mu_n=\frac{1}{n}\sum_{k=1}^n \delta_{x_k},
\emd
then there exists a function $f\in L^\infty$, such that $S_{\mu_n}f(x)$ diverges on a set
of positive measure.
\end{theorem}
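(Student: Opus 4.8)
The plan is to reduce \trm{T1} to an \emph{entropy lower bound} and feed the latter into J.~Bourgain's bounded entropy theorem from \cite{Bour}. First, the measures here fit the general framework \e{1}: $\mu_n(0)=0$ because $x_k>0$, and $\mu_n((-\delta,\delta))=\frac1n\#\{k\le n:\ x_k<\delta\}\to1$ because $x_k\to0$. Each $S_{\mu_n}$ is the convolution operator with Fourier multiplier $\widehat{\mu_n}(\xi)=\frac1n\sum_{k=1}^ne^{2\pi i\xi x_k}$, hence $(S_{\mu_n})$ is a sequence of translation-invariant operators on $L^2(\ZT)$ of norm at most $1$. Bourgain's theorem, in the contrapositive form relevant here, says: if there is a fixed $\varepsilon_0>0$ such that for every $M$ one can find a trigonometric polynomial $\phi$ with $\|\phi\|_{L^2}=1$ and indices $n_1<\dots<n_M$ satisfying $\|S_{\mu_{n_i}}\phi-S_{\mu_{n_j}}\phi\|_{L^2(\ZT)}\ge\varepsilon_0$ for all $i\ne j$, then the orbits $\{S_{\mu_n}f:\ n\ge1\}$ cannot have uniformly bounded $\varepsilon_0$-entropy over the unit ball of $L^2$, and therefore $S_{\mu_n}f$ fails to converge a.e.\ for some $f$ --- which, by the usual gluing of a counterexample from such $\phi$'s placed on disjoint frequency ranges with random signs, may be taken in $L^\infty(\ZT)$. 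So the entire task is to produce, for a fixed $\varepsilon_0$ and arbitrary $M$, the polynomial $\phi$ and the times above.

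To build $\phi$ I would exploit the clustering of the $x_k$ at $0$ by a multi-scale construction. Since $x_k\to0$, for each $\delta>0$ all but finitely many $x_k$ lie in $(0,\delta)$; hence, choosing the times $n_1<\dots<n_M$ large enough, $\mu_{n_i}$ has all but an $o(1)$ fraction of its mass in an interval $I_i=(0,\delta_i)$, with $\delta_1>\delta_2>\dots>\delta_M$ decreasing as fast as we wish. On the Fourier side, $\widehat{\mu_{n_i}}(\xi)\approx1$ for $|\xi|\ll\delta_i^{-1}$, whereas for $|\xi|\gg\delta_i^{-1}$ the averaging over translates spread across $I_i$ makes $\|S_{\mu_{n_i}}h\|_{L^2}$ small for $h$ concentrated at frequency $\sim\xi$ --- at least after adjusting the scales $\delta_i$ to the fine structure of $\{x_k\}$. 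Let $\phi$ be a normalized superposition of $L^2$-bumps $\psi_1,\dots,\psi_M$ of widths $\rho_1,\dots,\rho_M$, the scales interlaced with the $\delta_i$ so that $\rho_{i+1}\ll\delta_i\ll\rho_i$. Then $S_{\mu_{n_i}}$ essentially preserves $\psi_1,\dots,\psi_i$ and essentially annihilates (in $L^2$) the finer bumps $\psi_{i+1},\dots,\psi_M$, so the orbit of $\phi$ visits the $M$ partial superpositions $\psi_1,\ \psi_1+\psi_2,\dots$, up to controlled errors, and these are pairwise well separated. The delicate point is the normalization: one must arrange $\|\phi\|_{L^2}=1$ (and $\|\phi\|_{L^\infty}=O(1)$ for the gluing) while keeping the separation of the partial superpositions bounded below by a constant independent of $M$, which forces a careful choice of how the bumps overlap and are phased, and of how many frequencies enter each.

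The main obstacle is exactly this construction. The ``approximate identity'' scale of $S_{\mu_n}$ is governed by the concentration of the whole ensemble $\{x_1,\dots,x_n\}$, so one must control all the $o(1)$ error terms --- the spread-out part of $\mu_{n_i}$, the early points $x_1,\dots,x_K$ of vanishing mass, the imperfect localization of the $\psi_j$, and any sub-clustering of the $x_k$ at intermediate scales --- quantitatively and uniformly over an \emph{a priori arbitrarily slow} decay of $(x_k)$; this is what makes it a genuine (and historically hard) argument, essentially Bourgain's solution of Bellow's problem. Besides the construction, two ingredients are invoked as black boxes: Bourgain's bounded entropy theorem itself, and the soft functional-analytic step (the Banach principle on $\ZT$ together with the standard derivation of an $L^\infty$ counterexample from unbounded entropy) that turns the entropy lower bound into the failure of a.e.\ convergence. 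With the entropy lower bound in hand the contradiction is immediate and \trm{T1} follows.
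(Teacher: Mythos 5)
Your proposal does not prove the statement; it defers the entire mathematical content to a construction that is only described heuristically, and the mechanism you propose for that construction is flawed. The reduction itself is fine: pairwise $\varepsilon_0$-separation of $S_{\mu_{n_1}}\phi,\dots,S_{\mu_{n_M}}\phi$ for arbitrarily large $M$ does contradict Bourgain's bounded entropy principle, and citing that principle as a black box is legitimate. But the separation must actually be produced, and your interlaced-scale bump picture rests on the claim that $S_{\mu_{n_i}}$ ``essentially annihilates'' $L^2$-bumps at frequencies much larger than $\delta_i^{-1}$. That is false in general: $\widehat{\mu_n}(\xi)=\frac1n\sum_{k\le n}e^{2\pi i\xi x_k}$ is an almost periodic function of $\xi$, and by simultaneous Diophantine approximation there are arbitrarily large frequencies $\xi$ with $|\widehat{\mu_n}(\xi)|$ as close to $1$ as you like; moreover, if the points $x_1,\dots,x_n$ are tightly clustered, $S_{\mu_n}$ is close to a single translation and smooths nothing at any scale. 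So no damping at a scale determined only by the concentration interval $(0,\delta_i)$ can be asserted, and the hedge ``after adjusting the scales to the fine structure of $\{x_k\}$'' is precisely where the whole difficulty sits: the separation has to be extracted from the oscillation in $n$ of the exponential sums at frequencies chosen according to the arithmetic structure of $\{x_k\}$, uniformly over arbitrarily slow decay of $(x_k)$. That is the heart of Bourgain's (and Kostyukovsky--Olevskii's) argument, and you explicitly leave it open (``the main obstacle is exactly this construction''). The passage from an $L^2$ entropy blow-up to a counterexample in $L^\infty$ is likewise only gestured at.

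Note also that this paper takes an entirely different route: \trm{T1} is obtained here as an immediate corollary of the stronger \trm{T2} (strong sweeping out), since the set $E$ of \trm{T2} gives $f=\ZI_E\in L^\infty$ with $\limsup S_{\mu_n}f=1$ and $\liminf S_{\mu_n}f=0$ almost everywhere. \trm{T2} itself is proved without any entropy machinery, by the elementary geometric and combinatorial Lemmas 1--6 (equidistribution of the lattice-like sets $A_m$, the translation property of \lem{L2}, the choice of the scale $\lambda$ in \lem{L3}, and the product construction in \lem{L6}) combined with the sweeping-out criterion from \cite{Kar}, conditions $({\bf a})$--$({\bf c})$. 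If you want a complete proof within reach, that is the path to follow; if you want to keep the entropy route, you must supply the actual construction of the separated images, which your sketch does not.
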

In fact,  this theorem gave a negative answer to a problem due to A.~Bellow \cite{Bel} and
the proof is based on a general theorem often referred as Bourgain's entropy principle.
Applying his principle Bourgain was able to deduce an analogous theorems for Riemann sums
\md0
\frac{1}{n}\sum_{k=0}^{n-1}f\bigg(x+\frac{k}{n}\bigg),
\emd
and for the operators
\md0
\frac{1}{n}\sum_{k=1}^nf(kx).
\emd
We note, that first theorem was earlier obtained by W.~Rudin \cite{Rud} by different technique, and the second  by J.~Marstrand in \cite{Mar}. S.~Kostyukovsky
and A. Olevskii in \cite{KoOl}, using the same entropy principle, extended \trm{T1} for general discrete sequences satisfying \e{1}.

We found a new geometric proof for \trm{T1}, as well as for the result from \cite{KoOl}. Moreover, the method allows to obtain a stronger divergence for the operators \e{4}. So in this paper we prove
\begin{theorem}\label{T2}
If discrete measures $\mu_n$ satisfy \e{1}, then
there exists a set $E\subset \ZT$, such that
\md1\label{3}
\limsup_{n\to\infty}S_{\mu_n}\ZI_E(x)=1,\quad \liminf_{n\to\infty}S_{\mu_n}\ZI_E(x)=0
\emd
almost everywhere on $\ZT$.
\end{theorem}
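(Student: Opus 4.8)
The plan is to construct $E$ by a multiscale ``tribes''-type scheme and then to extract the required oscillation almost everywhere by a Borel--Cantelli argument. First I would make the standard reductions. Since for any $F\subset\ZT$ one has $0\le S_{\mu_n}\ZI_F\le \mu_n(\ZT)$ and $\mu_n(\ZT)\to1$, a subsequence of $(\mu_n)$ controls $\limsup_n$ from above and $\liminf_n$ from below, so it suffices to prove the statement for a suitable subsequence; using \e{1} and the discreteness of $\mu_n$ I may then assume, after relabelling, that each $\mu_n$ is \emph{finitely} supported in $(-\delta_n,\delta_n)\setminus\{0\}$ with $\delta_n\downarrow0$ as fast as I wish, and $\mu_n(\ZT)=1+O(\varepsilon_n)$ with $\sum_n\varepsilon_n<\infty$ (the part of $\mu_n$ outside $(-\delta_n,\delta_n)$ and the mass defect contribute $O(\varepsilon_n)\to0$ uniformly and are dropped). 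Here $\mu_n(0)=0$ is used to keep $0$ out of the support, and discreteness to reduce to finite support.

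Next, the construction. Fix rapidly increasing ``generations'' $0=p_0<p_1<p_2<\cdots$ and a partition $\ZN=\bigsqcup_kJ_k$ into consecutive windows, calibrated so that for $n\in J_k$ the measure $\mu_n$ is supported at scales finer than $2^{-p_k}$, and $2^{-p_{k+1}}$ is finer than every atom of every $\mu_n$ with $n\in J_k$. Writing $x\in[0,1)$ in binary as $x=0.x_1x_2x_3\dots$, I choose for each stage $k$ a family of pairwise disjoint ``blocks'' of digit positions $B_{k,1},B_{k,2},\dots$ lying in the range $(p_{k-1},p_k]$ and placed densely across the corresponding range of dyadic scales, with lengths $\ell_{k,i}$, and set
\md0
E:=\big\{x:\ x\restriction B_{k,i}\equiv 1\ \text{for some }k,i\big\}
\emd
(symmetrised with ``$\equiv0$'' blocks if needed to keep $0<m(E)<1$). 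The lengths $\ell_{k,i}$ are tuned tribes-style so that $\sum_{k,i}2^{-\ell_{k,i}}<\infty$ (whence $m(E)<1$ and $E\ne\emptyset$) while $\sum_k\big(\sum_i\ell_{k,i}2^{-\ell_{k,i}}\big)=\infty$, and so that the blocks are pairwise quasi-independent.

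Then I would extract the oscillation. Fix $x$. If $x$ carries an all-$1$'s block $B_{k',i}$ lying strictly above scale $\delta_n$, then since $\supp\mu_n\subset(-\delta_n,\delta_n)$ every translate $x+t$, $t\in\supp\mu_n$, retains that block (carries cannot reach it, modulo a $0$-buffer digit built into the construction), so $x+t\in E$ and $S_{\mu_n}\ZI_E(x)=\mu_n(\ZT)>1-1/k$. The crucial case, though, is that $S_{\mu_n}\ZI_E(x)>1-1/k$ also holds whenever, for $\mu_n$-almost every atom $t$, adding $t$ \emph{completes} one of the blocks to all $1$'s; because the blocks are dense across the scales probed by $\mu_n$, the set of such $x$ has measure, at stage $k$, comparable to $\sum_i\ell_{k,i}2^{-\ell_{k,i}}$, and these events are quasi-independent across $k$. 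Consequently $\sum_k m\big(\{x:\exists n\in J_k,\ S_{\mu_n}\ZI_E(x)>1-1/k\}\big)=\infty$ with quasi-independent summands, so by Borel--Cantelli a.e.\ $x$ satisfies $S_{\mu_n}\ZI_E(x)>1-1/k$ for some $n\in J_k$ for infinitely many $k$, i.e.\ $\limsup_nS_{\mu_n}\ZI_E(x)=1$ a.e. Running the same argument with $E^c$ in place of $E$ (for $\mu_n$-almost every atom $t$, $x+t$ \emph{avoids} or \emph{destroys} every block) gives $\liminf_nS_{\mu_n}\ZI_E(x)=0$ a.e. These two requirements are compatible precisely because $\mu_n(0)=0$, so $S_{\mu_n}\ZI_E(x)$ ignores the value of $\ZI_E$ at $x$ itself.

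The main obstacle is the calibration underlying Steps~2--3: one must choose the generations $p_k$, the windows $J_k$, and the block lengths $\ell_{k,i}$ so that simultaneously (i) $m(E)\in(0,1)$; (ii) the ``capture'' sets at stage $k$ have measures summing to $\infty$ yet each tending to $0$; (iii) they are quasi-independent across $k$; and, most delicately, (iv) the capture event survives the perturbation $x\mapsto x+t$ for \emph{every} $t\in\supp\mu_n$ and every $n\in J_k$, including measures $\mu_n$ that spread their mass over many dyadic scales below $\delta_n$ rather than concentrating near $\delta_n$. Bounding the number of digit positions (equivalently, dyadic arcs) that a single finitely supported $\mu_n$ can disturb, and matching ``completable blocks'' to the scales actually carrying the mass of $\mu_n$, is where the discreteness of $\mu_n$ and the localization \e{1} are used decisively; this bookkeeping is the technical core of the proof.
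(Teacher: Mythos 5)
There is a genuine gap, and it sits exactly at the point you flag as ``the technical core'': your set $E$ is built only from the concentration scales $\delta_n$ (via the digit blocks $B_{k,i}$), not from the actual positions of the atoms of $\mu_n$, and the central ``capture'' claim --- that the set of $x$ for which $x+t$ lands in $E$ for $\mu_n$-most atoms $t$ has measure comparable to $\sum_i\ell_{k,i}2^{-\ell_{k,i}}$ --- is unjustified and in general false. The condition $S_{\mu_n}\ZI_E(x)>1-1/k$ forces $x\in E-t$ \emph{simultaneously} for all but a $1/k$-fraction (in $\mu_n$-measure) of the atoms $t$. Take $\mu_n=\frac1n\sum_{j\le n}\delta_{x_j}$ with $x_j\searrow0$ rationally independent and with generic binary expansions spread over many scales below $\delta_n$: then the translates $E-x_j$ of a fixed digit-block set behave like essentially independent events of probability $m(E)$ small, so the measure of $\{x:\ x\in E-x_j \text{ for at least }(1-1/k)n \text{ values of }j\}$ decays superexponentially in $n$, and your Borel--Cantelli series cannot diverge once the number of atoms is unbounded. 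No tuning of $p_k$, $J_k$, $\ell_{k,i}$ fixes this, because the obstruction is that the target set must be adapted to the \emph{arithmetic} of the support, not merely to its scale. That is precisely what the paper does: it takes a maximal rationally independent subset of the support, forms the lattice-type sets $A_m$ and counts them in intervals by Weyl equidistribution (\lem{L1}), proves the translation stability $A_m\cap(-x_l,0)+X\subset A_{m+1}$ (\lem{L2}), and captures most of the mass with the $\lambda$-periodic sets of \lem{L3}; the dissociate-sum construction of \lem{L5}--\lem{L6} then multiplies these building blocks across a sparse subsequence.

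A second, independent gap is the passage from your capture events to the conclusion \e{3}. Even granting the limsup half, the liminf half does not follow ``by the same argument with $E^c$'': for a fixed set $E$ of positive measure, Markov only gives $|\{x:\,S_{\mu_n}\ZI_E(x)>\eta\}|\lesssim |\mu_n|\,|E|/\eta$, which is not small and not summable, so Borel--Cantelli does not produce $\liminf_n S_{\mu_n}\ZI_E(x)=0$ a.e.; and the asserted quasi-independence across stages $k$ is never established, while the events at different $k$ involve the same digits of $x$ shifted by overlapping measures. The paper avoids proving any a.e.\ statement directly: \lem{L6} only shows the \emph{relative} blow-up $|\{\sup_n S_{\mu_n}\ZI_A>\delta\}|>\Delta|A|$, and the two-sided a.e.\ oscillation \e{3} is then delivered wholesale by the transference theorem of \cite{Kar} for positive translation-invariant operators (conditions (a)--(c)). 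Your proposal has no substitute for that transference step, so even with a repaired capture estimate the deduction of \e{3} would remain incomplete.
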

The relations \e{3} for sequences of operators is called strong sweeping out property.
These kind of operators are investigated by M. Akcoglu, A. Bellow, R. L. Jones, V. Losert,
K.Reinhold-Larsson, M. Wierdl \cite{Akc1} and by M. Akcoglu, M. D. Ha, R. L. Jones \cite{Akc2}. In \cite{Akc1} strong sweeping out property for Riemann sums operators is obtained. In \cite{Akc2} authors prove a general version of Bourgain's entropy principle, which allows to deduce sweeping out properties for some operators, but the principle is not applicable for the operators $S_{\mu_n}$.
The proof of \trm{T2} is based on \lem{L6}. It will be obtained from \lem{L6} simply applying a general result proved in \cite{Kar}.

\end{section}
\begin{section}{Proof of theorem}
Let
\md1\label{seq}
X=\{x_i:\,i=1,\ldots ,l\},\quad 0<x_1\le x_2\le\ldots \le x_l<1,
\emd
be an arbitrary sequence of reals. Suppose
\md0
Y=\{y_i:\,i=1,\ldots ,\nu\}, \quad y_1<y_2<\ldots <y_\nu=x_l
\emd
is a maximal independent (with respect to rational numbers) subset of $X$ containing $x_l$. Then we have
\md0
x_k=r^{(k)}_1y_1+\ldots+r^{(k)}_\nu y_\nu,\quad k=1,2,\ldots,l,
\emd
for some rational numbers $r^{(k)}_i$. Let $p$ be the least common multiple of the denominators of  $r^{(k)}_i$. Then we get
\md1\label{a7}
x_k=\frac{n^{(k)}_1y_1+n^{(k)}_2y_2+\ldots +n^{(k)}_\nu y_\nu}{p},
\emd
for some $n^{(k)}_i\in\ZZ$. Denote
\md1\label{a8}
\tau=\max_{i,k}|n^{(k)}_i|,
\emd
and
\md1\label{a2}
\begin{array}{rl}
A_m=&\bigg\{ y=\frac{n_1y_1+n_2y_2+\ldots +n_\nu y_\nu}{p} ;\, n_i\in\ZZ,\\
&|n_i|\le m\tau,\, i=1,2,\ldots ,\nu-1,\,|n_\nu|\le \nu m\tau+1\bigg\}.
\end{array}
\emd

\begin{lemma}\label{L1}If \e{seq} is an arbitrary sequence with $\nu\ge2$, then for any interval $I\subset (-1,1)$ with $|I|\le y_\nu/p$ we have
\md1\label{a16}
\#\big( A_m\cap I\big)\sim \gamma m^{\nu-1}|I|\hbox{ as } m\to\infty,
\emd
where $\gamma=(2\tau)^{\nu-1}p/y_\nu$ is a constant depended on $X$.
\end{lemma}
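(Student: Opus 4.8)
The plan is to count lattice points in the polytope defining $A_m$ and use equidistribution of the rational combinations of the $y_i$ modulo the appropriate lattice. Write $n=(n_1,\dots,n_\nu)\in\ZZ^\nu$ and consider the linear map $L(n)=(n_1y_1+\dots+n_\nu y_\nu)/p\in\ZR$. Since $Y$ is rationally independent, $L$ is injective on $\ZZ^\nu$, and in fact the image $L(\ZZ^\nu)$ is a dense subgroup of $\ZR$. The set $A_m$ is the $L$-image of the box
\md0
B_m=\{n\in\ZZ^\nu:\,|n_i|\le m\tau\ (i<\nu),\ |n_\nu|\le \nu m\tau+1\},
\emd
so $\#(A_m\cap I)=\#\{n\in B_m:\,L(n)\in I\}$ (injectivity of $L$ guarantees no collisions). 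Thus I must count integer points of $B_m$ whose image under a fixed linear functional lands in a short interval $I$.

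First I would reduce to an equidistribution statement. Fix the last coordinate $n_\nu$; then as $(n_1,\dots,n_{\nu-1})$ ranges over the $(\nu-1)$-dimensional box $\{|n_i|\le m\tau\}$, the point $(n_1y_1+\dots+n_{\nu-1}y_{\nu-1})/p$ ranges over roughly $(2m\tau)^{\nu-1}$ values, and by Weyl's theorem (using that $y_1,\dots,y_{\nu-1}$ together with $1/p$ — or more precisely the relevant ratios — are irrational and rationally independent in the right sense) these values become equidistributed modulo any fixed period as $m\to\infty$. More carefully: $L(n)=L(n_1,\dots,n_{\nu-1},0)+n_\nu y_\nu/p$, so for each admissible $n_\nu$ one asks for the number of $(n_1,\dots,n_{\nu-1})$ with $L(n_1,\dots,n_{\nu-1},0)\in I-n_\nu y_\nu/p$. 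The multidimensional Weyl equidistribution theorem for the sequence $(n_1,\dots,n_{\nu-1})\mapsto (n_1y_1/p,\dots,n_{\nu-1}y_{\nu-1}/p)$ on the torus (irrationality coming from rational independence of $\{y_i\}$) gives that the count is asymptotically $(2m\tau)^{\nu-1}\cdot(p|I|/y_\nu)$, provided $|I|\le y_\nu/p$ so that the interval, reduced mod $y_\nu/p$, does not wrap around — this is exactly the hypothesis on $|I|$. Here the factor $p/y_\nu$ is the reciprocal of the mesh $y_\nu/p$ which plays the role of the fundamental period in the last variable.

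Then I would sum over the admissible values of $n_\nu$. There are $2(\nu m\tau+1)+1\sim 2\nu m\tau$ such values, but one must be careful: the range of $n_\nu$ was deliberately taken slightly larger (the $\nu m\tau+1$ bound rather than $m\tau$) precisely so that, after translating $I$ by $n_\nu y_\nu/p$ and reducing, one still sees the full $(\nu-1)$-dimensional box contributing — there are no boundary-effect losses in the $n_\nu$ direction for the relevant residues. Combining, $\#(A_m\cap I)\sim (2\nu m\tau)\cdot(2m\tau)^{\nu-1}\cdot\frac{1}{\nu}\cdot\frac{p|I|}{y_\nu}$; the factor $1/\nu$ appears because only a $1/\nu$-fraction of the enlarged $n_\nu$-range produces each target residue class correctly, or equivalently from normalizing the period — in any case the constants collapse to $(2\tau)^{\nu-1}(p/y_\nu)$, giving $\gamma m^{\nu-1}|I|$ with $\gamma=(2\tau)^{\nu-1}p/y_\nu$ as claimed.

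The main obstacle is making the equidistribution uniform and quantitative enough: Weyl's theorem gives equidistribution for a fixed interval, but here the target interval $I-n_\nu y_\nu/p$ moves with $n_\nu$ and $n_\nu$ itself grows with $m$, so I need a uniform discrepancy bound (e.g.\ Erd\H{o}s--Turán in the several-variable form) that controls all these counts simultaneously with an error $o(m^{\nu-1}|I|)$ after summation over the $O(m)$ values of $n_\nu$. Handling the interplay between the rational independence of $\{y_i\}$, the common denominator $p$, and the precise box dimensions — so that nothing is double-counted via $L$ and nothing is lost at the boundary of the box — is the delicate bookkeeping that the proof must carry out; the choice of the slightly asymmetric bound on $n_\nu$ in \e{a2} is clearly engineered to make this work cleanly.
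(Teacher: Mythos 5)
Your setup---identifying $\#(A_m\cap I)$ with the number of integer points $n$ in the box $B_m$ with $L(n)\in I$, with rational independence giving injectivity of $L$---matches the paper. The central counting step, however, is wrong. You fix $n_\nu$ and claim that the number of $(n_1,\dots,n_{\nu-1})$ with $|n_i|\le m\tau$ and $(n_1y_1+\dots+n_{\nu-1}y_{\nu-1})/p\in I-n_\nu y_\nu/p$ is asymptotically $(2m\tau)^{\nu-1}p|I|/y_\nu$ by Weyl equidistribution. For a \emph{fixed} $n_\nu$ this is a count in one short interval of the real line, not a fractional-part condition: the box sums spread over a range of length of order $m$ with a non-uniform (Irwin--Hall type) profile, so the count for fixed $n_\nu$ is only of order $m^{\nu-2}$ on average and depends strongly on where $I-n_\nu y_\nu/p$ sits in that range; equidistribution mod $1$ (or mod $y_\nu/p$) gives no such per-$n_\nu$ asymptotic. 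The arithmetic confirms the problem: your displayed product $(2\nu m\tau)\cdot(2m\tau)^{\nu-1}\cdot\tfrac1\nu\cdot\tfrac{p|I|}{y_\nu}$ equals $(2m\tau)^{\nu}\,p|I|/y_\nu$, which is of order $m^{\nu}$, one power of $m$ too many, and does not ``collapse'' to $\gamma m^{\nu-1}|I|$; the factor $1/\nu$ you insert to reconcile things has no justification.

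The correct use of the hypothesis $|I|\le y_\nu/p$ (whose role you sense correctly) is the opposite of fixing $n_\nu$: one absorbs $n_\nu$ into a reduction mod $1$. Multiplying by $p/y_\nu$, the condition $y\in I$ reads $\sum_{i<\nu}n_iy_i/y_\nu+n_\nu\in\frac{p}{y_\nu}I$; since this interval has length at most $1$, each $(n_1,\dots,n_{\nu-1})$ admits at most one $n_\nu$, and since $|\sum_{i<\nu}n_iy_i/y_\nu|\le(\nu-1)m\tau$ the constraint $|n_\nu|\le\nu m\tau+1$ is automatic for large $m$. Hence $\#(A_m\cap I)$ equals the number of $(n_1,\dots,n_{\nu-1})$ in the box with $\sum_{i<\nu}n_iy_i/y_\nu\in\frac{p}{y_\nu}I+\ZZ$, a genuine mod-$1$ condition. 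The paper evaluates this by writing the sum as $n_{\nu-1}\theta+t$ with $\theta=y_{\nu-1}/y_\nu$ irrational and using that $n\theta+t$ is equidistributed uniformly in the shift $t$, summed over the $(2m\tau+1)^{\nu-2}$ distinct shifts produced by $n_1,\dots,n_{\nu-2}$; this built-in uniformity also disposes of your ``moving target'' worry without any Erd\H{o}s--Tur\'an discrepancy input. Equivalently, your multidimensional Weyl idea does work, but only when applied to this mod-$1$ condition rather than to each fixed $n_\nu$ separately.
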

\begin{proof}
It is easy to observe that
\md2
 A_m \cap I =\bigg\{&y=\frac{n_1y_1+\ldots +n_\nu y_\nu}{p}:\\
&\,n_1\frac{y_1}{y_\nu}+\ldots +n_{\nu-1}\frac{y_{\nu-1}}{y_\nu} \in \frac{p}{y_\nu} \cdot I+\ZZ\cap [-(\nu m\tau+1),(\nu m\tau+1)],\\
&|n_i|\le m\tau,\, i=1,2,\ldots ,\nu-1\bigg\}.
\emd
On the other hand if $y\in A_m \cap I$, then, by \e{a2} we have
\md0
|n_1\frac{y_1}{y_\nu}+\ldots +n_{\nu-1}\frac{y_{\nu-1}}{y_\nu}|\le \nu m\tau.
\emd
Using also the relation $|I|\le y_\nu/p$, we conclude
\md9\label{a24}
 A_m \cap I =&\bigg\{y=\frac{n_1y_1+\ldots +n_\nu y_\nu}{p}:\\
&\,n_1\frac{y_1}{y_\nu}+\ldots +n_{\nu-1}\frac{y_{\nu-1}}{y_\nu} \in \frac{p}{y_\nu} \cdot  I+\ZZ,\\
&|n_i|\le m\tau,\, i=1,2,\ldots ,\nu-1\bigg\}.
\emd
Since $y_1,\ldots,y_\nu$ are independent, the number
\md0
\theta=y_{\nu-1}/y_\nu
\emd
is irrational. Denoting
\md1\label{a21}
E_m=\left\{n_1\frac{y_1}{y_\nu}+\ldots +n_{\nu-2}\frac{y_{\nu-2}}{y_\nu}:\,|n_i|\le m\tau,\, i=1,2,\ldots ,\nu-2\right\}
\emd
from \e{a24} we get
\md1\label{a23}
\frac{p}{y_\nu} \cdot\big( A_m\cap I\big)
=\big(\{n_{\nu-1}\theta:\,|n_{\nu-1}|\le m\tau\}+E_m\big)\cap\big(\frac{p}{y_\nu} \cdot  I+\ZZ\big).
\emd
It is well known that $n\theta+t$, $n=1,2,\ldots $ ($n=-1,-2,\ldots $), is a uniformly distributed sequence. This implies
\md1\label{a25}
\frac{\#\big(\{n_{\nu-1}\theta:\,|n_{\nu-1}|\le m\tau\}+t\big)\cap \big(\frac{p}{y_\nu} \cdot  I+\ZZ\big)}{2m\tau}\to \frac{p|I|}{y_\nu},\hbox { as } m\to \infty,
\emd
for any $t\in \ZR$ and the convergence is uniformly. Since $y_1,\ldots ,y_{\nu-1}$ are independent from \e{a21} we obtain
\md0
|E_m|=(2m\tau+1)^{\nu-2}.
\emd
Finally, using \e{a23} and \e{a25}, we get
\md0
\#\big( A_m\cap I\big)=\#\bigg(\frac{p}{y_\nu} \cdot\big( A_m\cap I\big)\bigg)\sim 2m\tau\frac{p|I|}{y_\nu}|E_m|\sim  (2m\tau)^{\nu-1}\frac{p|I|}{y_\nu}.
\emd
\end{proof}
\begin{lemma}\label{L2}
For any set \e{seq} we have
\md1\label{b1}
 A_m\cap (-x_l,0)+X\subset A_{m+1}\cap (-x_l,x_l),\, m=1,2,\ldots,
\emd
where $A_m$ is defined in \e{a2}.
\end{lemma}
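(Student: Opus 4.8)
The plan is to argue directly from the definitions, without any of the equidistribution input used in \lem{L1}. An element of $A_m\cap(-x_l,0)$ comes with an explicit representation $\frac{n_1y_1+\ldots+n_\nu y_\nu}{p}$ satisfying the size constraints in \e{a2}, and each $x_k\in X$ comes with the representation \e{a7}, whose coefficients are bounded by $\tau$ because of \e{a8}. I would add the two representations coefficientwise, check that the resulting coefficients obey the (looser) constraints defining $A_{m+1}$, and separately verify the elementary inclusion of intervals $(-x_l,0)+X\subset(-x_l,x_l)$.

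First I would observe that, since $y_1,\ldots,y_\nu$ are rationally independent, a real number admits at most one representation of the form $\frac{n_1y_1+\ldots+n_\nu y_\nu}{p}$ with $n_i\in\ZZ$; consequently, if $y\in A_m$ has coefficients $(n_i)$ and $x_k$ has coefficients $(n_i^{(k)})$, then $y+x_k$ is precisely the point with coefficients $(n_i+n_i^{(k)})$, so the coefficient count below is meaningful.

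Then comes the bookkeeping. If $y=\frac{\sum_i n_iy_i}{p}\in A_m$, then $|n_i|\le m\tau$ for $i\le\nu-1$ and $|n_\nu|\le\nu m\tau+1$, while $|n_i^{(k)}|\le\tau$ for all $i$ by \e{a8}. Hence for $i\le\nu-1$ one has $|n_i+n_i^{(k)}|\le m\tau+\tau=(m+1)\tau$, and for $i=\nu$ one has $|n_\nu+n_\nu^{(k)}|\le\nu m\tau+1+\tau\le\nu(m+1)\tau+1$, the last step being the trivial inequality $\tau\le\nu\tau$. This gives $y+x_k\in A_{m+1}$. Moreover, since $-x_l<y<0$ and $0<x_k\le x_l$ (recall $x_l$ is the largest element of $X$), we get $-x_l<y+x_k<x_l$, that is $y+x_k\in(-x_l,x_l)$. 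Combining the two conclusions yields $y+x_k\in A_{m+1}\cap(-x_l,x_l)$, which is exactly \e{b1}.

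I do not expect any real obstacle here; the only two points needing a little care are (i) invoking rational independence so that the integer representation of each point is unique and coordinatewise addition is legitimate, and (ii) keeping track of the asymmetric bound on the last coordinate $n_\nu$ — the extra ``$+1$'' and the factor $\nu$ appearing in \e{a2} — so that it is comfortably absorbed in passing from $A_m$ to $A_{m+1}$.
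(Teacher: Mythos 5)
Your proposal is correct and follows essentially the same argument as the paper: add the integer coefficient representations of $y\in A_m$ and $x_k$ (as in \e{a7}), bound the sums by $(m+1)\tau$ and $\nu(m+1)\tau+1$ respectively, and note the interval inclusion $(-x_l,0)+X\subset(-x_l,x_l)$. Your explicit remark on uniqueness of the representation via rational independence, and your slightly more careful bound $\nu m\tau+1+\tau\le\nu(m+1)\tau+1$ (valid even when $\nu=1$, where the paper's strict inequality would not hold), are harmless refinements of the same proof.
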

\begin{proof}
Take an arbitrary point $x\in \in A_m\cap (-x_l,0)$. According to the definition of $y_1,\ldots,y_\nu$ we will have
\md0
x=\frac{n_1y_1+n_2y_2+\ldots +n_\nu y_\nu}{p},
\emd
Then suppose $x_k\in X$ has representation \e{a7}. Since $x\in (-x_l,0)$ and $0<x_k\le x_l$ we get
\md1\label{a11}
x+x_k\in (-x_l,x_l).
\emd
On the other hand
\md0
x+x_k=\frac{(n_1+n^{(k)}_1)y_1+(n_2+n^{(k)}_2)y_2+\ldots +(n_\nu+n^{(k)}_\nu) y_\nu}{p},
\emd
and by \e{a2} \e{a8} we have
\md9\label{a12}
&|n_i+n^{(k)}_i|\le m\tau+\tau=(m+1)\tau ,i=1,2,\ldots ,\nu-1,\\
&|n_\nu+n^{(k)}_\nu|\le \nu m\tau+1+\tau<\nu (m+1)\tau.
\emd
This means $x+x_k\in A_{m+1}$. Combining \e{a11} and \e{a12} we get \e{b1}.
\end{proof}
\begin{lemma}\label{L3}
For any numbers $\delta>0$, $0<\varepsilon<1/3$ and measure
\md1\label{e0}
\mu=\sum_{k=1}^lm_k\delta_{x_k},\, m_k>0,\,0<x_1<x_2<\ldots<x_l,
\emd
there exists a real number $\lambda$, with $0<\lambda\le \delta $, such that
\md7\label{e2}
S_\mu \ZI_{\{t:\,\{ t/\lambda\}>\varepsilon\}}(x)\\
=\int_\ZT\ZI_{\{t:\,\{ t/\lambda\}>\varepsilon\}}(x+t)d\mu(t)>(1-3\varepsilon) |\mu|,\hbox{ as } \{x/\lambda\}<\varepsilon.
\emd
\end{lemma}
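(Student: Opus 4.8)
Write $E_\lambda=\{t:\{t/\lambda\}>\varepsilon\}$; since $\mu$ is the atomic measure \e{e0},
\md0
S_\mu\ZI_{E_\lambda}(x)=\sum_{k=1}^lm_k\,\ZI_{E_\lambda}(x+x_k)=|\mu|-\sum_{k:\;x+x_k\notin E_\lambda}m_k ,
\emd
so \e{e2} is equivalent to finding one $\lambda\in(0,\delta]$ for which the exceptional mass $\sum_{k:\,x+x_k\notin E_\lambda}m_k$ stays below $3\varepsilon|\mu|$ for every $x$ with $\{x/\lambda\}<\varepsilon$. The first step is a uniform estimate for the exceptional atoms: fixing such an $x$ and writing $a=\{x/\lambda\}<\varepsilon$, $b_k=\{x_k/\lambda\}$, one has $\{(x+x_k)/\lambda\}=\{a+b_k\}$, and if $\varepsilon<b_k<1-\varepsilon$ then $a+b_k\in(\varepsilon,1)$, so $\{a+b_k\}=a+b_k>\varepsilon$, i.e. $x+x_k\in E_\lambda$. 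Taking contrapositives,
\md0
\{k:\;x+x_k\notin E_\lambda\}\ \subset\ B(\lambda):=\big\{k:\;\{x_k/\lambda\}\in[0,\varepsilon]\cup[1-\varepsilon,1)\big\},
\emd
and the right-hand side is independent of $x$; hence it suffices to produce $\lambda\in(0,\delta]$ with $\sum_{k\in B(\lambda)}m_k<3\varepsilon|\mu|$.

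For the second step I would pass to the reciprocal parameter $u=1/\lambda$, so that the admissible range is $u\ge1/\delta$ and $k\in B(1/u)$ exactly when $x_ku$ lies within distance $\varepsilon$ of $\ZZ$. For each fixed $k$ the numbers $\{x_ku\}$ are uniformly distributed in $[0,1)$ as $u\to\infty$ — an elementary count of interval lengths, using only $x_k>0$ and no arithmetic relation among the $x_k$ — and $[0,\varepsilon]\cup[1-\varepsilon,1)$ has measure $2\varepsilon$, so, summing the finitely many indices,
\md0
\frac1L\int_{1/\delta}^{1/\delta+L}\Big(\sum_{k\in B(1/u)}m_k\Big)\,du\ \longrightarrow\ 2\varepsilon\,|\mu|\ <\ 3\varepsilon\,|\mu|,\qquad L\to\infty .
\emd
Consequently, for all large $L$ the inequality $\sum_{k\in B(1/u)}m_k<3\varepsilon|\mu|$ holds on a subset of $[1/\delta,1/\delta+L]$ of positive measure; choosing any such $u$ and setting $\lambda=1/u\in(0,\delta]$ yields \e{e2} by the first step.

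The crux is the second step. One cannot simply pick $\lambda$ making every $\{x_k/\lambda\}$ land in $(\varepsilon,1-\varepsilon)$, because the $x_k$ may be rationally dependent and then $(\{x_1/\lambda\},\dots,\{x_l/\lambda\})$ need not equidistribute in $\ZT^l$ as $\lambda\to0$. Passing to $u=1/\lambda$ and averaging circumvents this, since each \emph{single} orbit $u\mapsto x_ku\bmod 1$ equidistributes irrespective of the arithmetic of the $x_k$; the margin between the mean value $2\varepsilon|\mu|$ and the target $3\varepsilon|\mu|$ — which is why the statement has $3\varepsilon$ rather than $2\varepsilon$ — is exactly what lets one pass from an averaged bound to a bound at a genuine value of $\lambda$.
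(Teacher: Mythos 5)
Your argument is correct, and its skeleton is the same first-moment argument as the paper's: both proofs reduce the claim to a condition on the atoms alone via the observation that if $\{x/\lambda\}<\varepsilon$ and $\{x_k/\lambda\}\in(\varepsilon,1-\varepsilon)$ then $\{(x+x_k)/\lambda\}>\varepsilon$ (in the paper this appears at the end, as \e{e7}--\e{e8}; you place it up front as the reduction to your $x$-independent set $B(\lambda)$), and both then produce one good scale by averaging over the dilation parameter. The implementations differ in a way worth noting. The paper averages the ``good mass'' $S_\mu\ZI_{\lambda F}(0)$ over a short interval $\lambda\in[0,r]$ with $r\lesssim\varepsilon x_1$, bounding $|E_t\cap[0,r]|$ from below by summing the lengths of the harmonic intervals $\left(\tfrac{t}{k+1-\varepsilon},\tfrac{t}{k+\varepsilon}\right)$; there the constant $1-3\varepsilon$ emerges from the tail and endpoint losses of that series, and the estimate is explicit at one fixed small $r\le\delta$. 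You instead average the ``bad mass'' in the reciprocal variable $u=1/\lambda$ over a long window $[1/\delta,1/\delta+L]$, where $u\mapsto\{x_ku\}$ is exactly periodic, so the limiting density of the bad set is exactly $2\varepsilon$ per atom, and the slack $2\varepsilon<3\varepsilon$ is spent only in passing from the average to a single $u$ (and hence $\lambda=1/u\le\delta$). Your route avoids any series estimate and makes the mean value exact, at the price of an asymptotic statement in $L$; the paper's computation is fully quantitative at a single scale. Both versions secure the strict inequality and the constraint $0<\lambda\le\delta$, so the proposal stands as a valid, mildly different implementation of the same averaging idea.
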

\begin{proof}
Denote
\md1\label{e3}
E_t=\{\lambda>0:\, \{t/\lambda\}\in  (\varepsilon,1-\varepsilon)\},\quad t>0.
\emd
It is clear
\md0
E_t=\bigcup_{k=0}^\infty\left(\frac{t}{k+1-\varepsilon},\frac{t}{k+\varepsilon}\right).
\emd
Hence if
\md0
r=\min\left\{\frac{\varepsilon x_1}{2(1-\varepsilon)},\delta\right\}
\emd
and $t\ge x_1$, we obtain
\md9\label{e4}
|E_{t}\cap [0,r]|>&\sum_{k>t/r }\left(\frac{t}{k+\varepsilon}-\frac{t}{k+1-\varepsilon }\right)\\
&=\sum_{k>t/r}\left(\frac{(1-2\varepsilon)t}{(k+\varepsilon)(k+1-\varepsilon)}\right)> (1-2\varepsilon)t\sum_{k>t/r}\frac{1}{(k+1)^2}\\
&>\frac{(1-2\varepsilon)tr}{t+2r}>\frac{(1-2\varepsilon)x_1r}{x_1+2r}
\ge\frac{(1-2\varepsilon)x_1r}{x_1+\varepsilon x_1/(1-\varepsilon)}\\
&=(1-2\varepsilon)(1-\varepsilon)r>(1-3\varepsilon)r.
\emd
Thus, denoting
\md0
F=\{t>0:\, \{t \}\in (\varepsilon,1-\varepsilon)\},
\emd
by \e{e3} we have
\md0
E_t=\{\lambda>0:\, t\in  \lambda F\}
\emd
and therefore, using \e{e4}, we get
\md9\label{e5}
\int_0^rS_\mu \ZI_{\lambda F}(0)d\lambda&=\int_0^r\int_\ZT \ZI_{\lambda F}(t)d\mu(t)d\lambda\\
&=\int_\ZT\int_0^r \ZI_{\lambda F}(t)d\lambda d\mu(t)
=\int_\ZT|E_t\cap [0,r]|d\mu(t)\\
&=\sum_{i=1}^lm_i|E_{x_i}\cap [0,r]|\ge (1-3\varepsilon)r|\mu|.
\emd
This implies
\md1\label{e6}
S_\mu\ZI_{\lambda F}(0)>(1-3\varepsilon)|\mu|
\emd
for some $0<\lambda\le r\le \delta$. From \e{e6} it follows that
\md1\label{e7}
S_\mu\ZI_{{\lambda F}+x}(x)>(1-3\varepsilon)|\mu|,\quad x\in \ZR.
\emd
It is clear
\md1\label{e8}
\bigcup_{x:\,\{ x/\lambda\}<\varepsilon}({\lambda F}+x)=\{t:\, \{ t/\lambda\}>\varepsilon\}.
\emd
Thus, using \e{e7} and \e{e8}, for any $x$, $\{ x/\lambda\}<\varepsilon$, we obtain
\md0
S_\mu\ZI_{\{t:\,\{ t/\lambda\}>\varepsilon\}}(x)\ge S_\mu\ZI_{{\lambda F}+x}(x)>(1-3\varepsilon)|\mu|.
\emd
This implies \e{e2} and lemma is proved.
\end{proof}
\begin{lemma}\label{L4}
For any measure \e{e0}and number $0<\varepsilon<1/3$ there exist finite sets $E,G\subset (-x_l,x_l)$ such that
\md3
&E\cap G=\varnothing,\quad \#E>\frac{\varepsilon\#G}{4},\label{h0}\\
&S_\mu\ZI_G(x)>(1-3\varepsilon)|\mu|,\quad x\in E. \label{h2}
\emd
\end{lemma}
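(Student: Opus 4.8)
The plan is to read $E$ and $G$ off the three preceding lemmas: \lem{L3} supplies a scale $\lambda$ at which $\mu$ ``fills in'' the complement of a thin $\lambda$-periodic pattern, \lem{L2} says that translating $A_m$ by $X$ stays inside $A_{m+1}$, and \lem{L1} lets us compare $\#(A_m\cap S)$ with $\#(A_{m+1}\cap S)$ for the sets $S$ cut out by that pattern. First I would apply \lem{L3} with $\delta=\min\{x_l/p,\,x_l/10\}$, so that simultaneously $\lambda\le y_\nu/p$ (making \lem{L1} applicable on intervals of length $\le\lambda$) and $\lambda\le x_l/10$; this yields $0<\lambda\le\delta$ with $S_\mu\ZI_{\{t:\,\{t/\lambda\}>\varepsilon\}}(x)>(1-3\varepsilon)|\mu|$ whenever $\{x/\lambda\}<\varepsilon$. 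Then, for a large integer $m$ to be chosen at the end, put
\md0
E=A_m\cap(-x_l,0)\cap\{x:\,\{x/\lambda\}<\varepsilon\},\qquad G=A_{m+1}\cap(-x_l,x_l)\cap\{t:\,\{t/\lambda\}>\varepsilon\}.
\emd
Both are finite subsets of $(-x_l,x_l)$ since $A_m$ is finite by \e{a2}.

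Property \e{h2} is then \lem{L2} followed by \lem{L3}. Fix $x\in E$. If $1\le k\le l$ and $\{(x+x_k)/\lambda\}>\varepsilon$, then $x\in A_m\cap(-x_l,0)$ forces $x+x_k\in A_{m+1}\cap(-x_l,x_l)$ by \e{b1}, so $x+x_k\in G$. Hence $\{k:\,x+x_k\in G\}\supseteq\{k:\,\{(x+x_k)/\lambda\}>\varepsilon\}$, and
\md0
S_\mu\ZI_G(x)=\sum_{k:\,x+x_k\in G}m_k\ \ge\ \sum_{k:\,\{(x+x_k)/\lambda\}>\varepsilon}m_k=S_\mu\ZI_{\{t:\,\{t/\lambda\}>\varepsilon\}}(x)>(1-3\varepsilon)|\mu|,
\emd
the last step by \lem{L3}, valid since $\{x/\lambda\}<\varepsilon$ for $x\in E$. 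The disjointness $E\cap G=\varnothing$ in \e{h0} is automatic, since $\{\{u\}<\varepsilon\}$ and $\{\{u\}>\varepsilon\}$ are disjoint subsets of $\ZR$.

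There remains the count $\#E>\varepsilon\#G/4$, and here \lem{L1} does the work. The sets $S_E:=(-x_l,0)\cap\{\{x/\lambda\}<\varepsilon\}$ and $S_G:=(-x_l,x_l)\cap\{\{t/\lambda\}>\varepsilon\}$ are unions of finitely many intervals, each of length $\le\lambda\le y_\nu/p$; summing \e{a16} over those intervals (a fixed finite family once $\lambda,\varepsilon,x_l$ are fixed) gives $\#E\sim\gamma m^{\nu-1}|S_E|$ and $\#G\sim\gamma(m+1)^{\nu-1}|S_G|$ as $m\to\infty$. Since $|S_G|\le 2x_l$ while $|S_E|\ge\varepsilon\lambda(\lfloor x_l/\lambda\rfloor-1)\ge(1-2\lambda/x_l)\varepsilon x_l>\tfrac45\varepsilon x_l$ (using $\lambda\le x_l/10$),
\md0
\frac{\#E}{\#G}\sim\Big(\frac{m}{m+1}\Big)^{\nu-1}\frac{|S_E|}{|S_G|}\ \longrightarrow\ \frac{|S_E|}{|S_G|}>\frac25\varepsilon>\frac\varepsilon4 .
\emd
Fixing any $m$ large enough that $\#E>\varepsilon\#G/4$ (and $E,G\neq\varnothing$) finishes the construction.

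The delicate point is precisely this last comparison. Translating $A_m$ by $X$ unavoidably pushes the index bounds from $m\tau$ up to $(m+1)\tau$ --- see \e{a12} --- so $\#E$ and $\#G$ come from consecutive levels; what rescues the estimate is that $(m/(m+1))^{\nu-1}\to1$, so in the limit this level shift costs nothing and the ratio stabilizes near $|S_E|/|S_G|\approx\varepsilon/2$, comfortably above $\varepsilon/4$ --- hence the freedom to take $m$ large is essential. One caveat: \lem{L1} assumes $\nu\ge2$. In the degenerate case $\nu=1$ (all $x_k$ in a single progression $\tfrac{x_l}{p}\ZZ$) the set $A_m$ is too sparse on short intervals, and one instead makes an elementary combinatorial choice of $E,G$ inside $\tfrac{x_l}{p}\ZZ$, using that translating a subset of $\tfrac{x_l}{p}\ZZ$ by $X$ enlarges its index set only additively; the substance of the argument is the case $\nu\ge2$ described above.
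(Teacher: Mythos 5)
Your construction is exactly the paper's: you take $E=A_m\cap U_\lambda$ and $G=A_{m+1}\cap V_\lambda$ for the $\lambda$ supplied by \lem{L3}, prove \e{h2} via \lem{L2} followed by \lem{L3}, and obtain the count \e{h0} from the asymptotics of \lem{L1} together with $|U_\lambda|>\varepsilon x_l/2$ and $|V_\lambda|\le 2x_l$, so the proposal is correct and follows the paper's route. If anything, you are a bit more explicit than the paper about the minor points it glosses over (the $m$ versus $m+1$ level, the hypothesis $|I|\le y_\nu/p$ of \lem{L1}, and the degenerate case $\nu=1$, which the paper passes over in silence and your sketch only outlines).
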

\begin{proof}
Denote
\md1\label{h5}
U_\lambda=\{t\in (-x_l,0):\, \{ t/\lambda\}<\varepsilon\},\quad V_\lambda=\{t\in (-x_l,x_l):\, \{t/\lambda\}>\varepsilon\}
\emd
It is clear $|U_\lambda|\to \varepsilon x_l$ and $|V_\lambda|\to 2(1-\varepsilon)x_l$  as $\lambda\to 0$. On the other hand, by \lem{L3}, for $\lambda$ small enough we have \e{e2}. So we can fix $\lambda$ satisfying \e{e2} and the conditions
\md1\label{h3}
0<\lambda <x_1,\quad |V_\lambda|<2x_l,\quad |U_\lambda|>\frac{\varepsilon x_l}{2}.
\emd
Denote
\md1\label{h7}
E_m=A_m\cap U_\lambda,\quad G_m=A_{m+1}\cap V_\lambda.
\emd
Since the sets $U_\lambda $ and $V_\lambda$ are finite union of intervals in $(-1,1)$, according to \lem{L1} we have
\md0
\#E_m\sim \gamma m^{\mu-1}|U_\lambda|,\quad \#G_m\sim \gamma m^{\mu-1}|V_\lambda|
\emd
as $m\to\infty$. Hence for an integer $m$ large enough, denoting
\md0
E=E_m,\quad G=G_m
\emd
and taking into account \e{h3} we will have
\md1\label{h6}
\#E>\frac{\varepsilon \#G}{4}.
\emd
Besides, since $U_\lambda\cap V_\lambda=\varnothing$ we have $E\cap G=\varnothing$ and so \e{h0}. To show \e{h2} we take an arbitrary $x\in E$. Because of \e{h5} and \e{h7}we will have
\md0
x\in A_m\cap (-x_l,0),\quad \{ x/\lambda\}<\varepsilon.
\emd
From \lem{L2} we get $x+X\in A_{m+1}\cap (-x_l,x_l)$. Thus we get
\md0
S_\mu\ZI_G(x)=S_\mu\ZI_{V_\lambda}(x)=S_\mu\ZI_{\{t:\, \{\lambda t\}>\varepsilon\}}(x)
\emd
and therefore, since we have $ \{ x/\lambda\}<\varepsilon$, from \lem{L3} we obtain \e{h2}.
\end{proof}
For an arbitrary nonempty finite set $A\subset \ZR \setminus\{0\}$ we define
\md0
\d(A)=\left\{
  \begin{array}{ll}
    \min\{|x-y|:\, x,y\in A,\, x\neq y\}, & \hbox{ if } \#A\ge 2,\\
    |x|, & \hbox{ if } A= \{x\}.
  \end{array}
\right.
\emd
\begin{lemma}\label{L5}
Let $A_k\subset \ZR \setminus \{0\}$, $k=1,2,\ldots $, be a sequence of nonempty finite sets such that and
\md1\label{f1}
\max A_{k+1}\le \frac{1}{4}\cdot \d( A_k), \quad k=1,2,\ldots .
\emd
Then the equality
\md1\label{f2}
x_1+x_2+\ldots +x_n=y_1+y_2+\ldots +y_n,\quad x_i,y_i\in A_i,\, i=1,2,\ldots ,n
\emd
implies $x_i=y_i$, $i=1,2,\ldots ,n$.
\end{lemma}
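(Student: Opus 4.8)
The plan is to induct on $n$, matching the coordinates one at a time starting from the first. I would begin by recording the mechanism that drives everything. Write $M_k=\max A_k$ for the scale of $A_k$. The hypothesis \e{f1}, combined with the trivial bound $\d(A_k)\le 2M_k$ — which holds in both cases of the definition of $\d$, being the diameter bound when $\#A_k\ge 2$ and reading $|x|\le 2|x|$ when $A_k=\{x\}$ — forces the scales to contract at least geometrically:
\[
M_{k+1}\le\tfrac14\d(A_k)\le\tfrac12 M_k,\qquad\text{so that}\qquad M_{k+j}\le 2^{-j}M_k\quad(j\ge 0).
\]

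For the inductive step, assume \e{f2} and suppose, for contradiction, that $x_1\ne y_1$; then $\#A_1\ge 2$, so $|x_1-y_1|\ge\d(A_1)$ by the definition of $\d$. Rearranging \e{f2} as $x_1-y_1=\sum_{i=2}^n(y_i-x_i)$ and estimating the tail with the contraction above gives
\[
\d(A_1)\le|x_1-y_1|\le\sum_{i=2}^n|y_i-x_i|\le\sum_{i=2}^n 2M_i\le 2M_2\sum_{j=0}^{n-2}2^{-j}<4M_2\le\d(A_1),
\]
the last step being \e{f1} with $k=1$; this contradiction forces $x_1=y_1$. Cancelling the first term from \e{f2} leaves $\sum_{i=2}^n x_i=\sum_{i=2}^n y_i$, and since $A_2,A_3,\dots$ again satisfies \e{f1}, the induction hypothesis (for $n-1$ summands) yields $x_i=y_i$ for $i=2,\dots,n$. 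Together with the trivial base case $n=1$, this proves the lemma.

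I do not expect a genuine obstacle here: this is a \emph{separation of scales} statement, essentially uniqueness of a digit expansion along a super-lacunary family. The only thing to be careful about is the arithmetic of the geometric series — the constant $\tfrac14$ in \e{f1} is precisely what makes the estimate beat $\d(A_1)$ strictly, since $2M_2\sum_{j\ge 0}2^{-j}=4M_2$ is matched exactly against $4M_2\le\d(A_1)$, so a weaker ratio would not close the argument. A secondary point is that $\d(A_k)>0$ for every $k$ (needed so that the lower bound $|x_1-y_1|\ge\d(A_1)$ is meaningful), and this is exactly where the hypothesis $A_k\subset\ZR\setminus\{0\}$ enters, through the singleton case of the definition of $\d$.
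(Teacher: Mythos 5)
Your proof is correct and is essentially the paper's own argument: the paper lets $k$ be the first index with $x_k\neq y_k$, cancels the earlier terms, and derives the same contradiction $\d(A_k)\le |x_k-y_k|\le\sum_{i>k}(|x_i|+|y_i|)<|x_k-y_k|$ via the same contraction chain $\max A_i\le\frac14\d(A_{i-1})\le\frac12\max A_{i-1}$ and geometric series, so your induction on $n$ with the first coordinate matched is just a repackaging of that. No changes needed.
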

\begin{proof}Suppose to the contrary in \e{f2} we have $x_i=y_i$, $i<k$, and $x_k\neq y_k$. Hence we get
\md1\label{f4}
x_k+\ldots +x_n=y_k+\ldots +y_n.
\emd
From \e{f1} and the relation
\md0
\max A_i\le\frac{1}{4}\cdot \d(A_{i-1})\le \frac{1}{2}\max A_{i-1}
\emd
it follows that
\md7\label{f3}
|x_i|,|y_i|\le \max A_i\le \frac{1}{2}\max A_{i-1}\le\ldots \\\le\frac{1}{2^{i-k-1}}\max A_{k+1}\le \frac{\d( A_k)}{2^{i-k+1}}\le \frac{|x_k-y_k|}{2^{i-k+1}}
\emd
for any $i=k+1,k+2,\ldots, n$. Thus, using \e{f4} and \e{f3}, we get
\md6
|x_k-y_k|\le |x_{k+1}|+|y_{k+1}|+\ldots+|x_n|+|y_n|\\
< 2|x_k-y_k|\sum_{i=1}^\infty\frac{1}{2^{i+1}}=|x_k-y_k|
\emd
which is a contradiction and so  $x_i=y_i$ for all $i=1,2,\ldots ,n$.
\end{proof}
\begin{lemma}\label{L6}
Let $\mu_n$ be a sequence of measures, satisfying the condition \e{1}. Then for any numbers $\Delta>0$ and $0<\delta<1$ there exists a measurable set $A\subset \ZT$, $|A|>0$, such that
\md1\label{d1}
|\{x\in \ZT:\, \sup_{n\in \ZN}S_{\mu_n}\ZI_A(x)>\delta\}|>\Delta \cdot |A|.
\emd
\end{lemma}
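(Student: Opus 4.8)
The plan is to realize $A$ as a thickened \emph{sumset} built from finitely many well‑separated scales. Fix $\varepsilon>0$ with $(1-3\varepsilon)(1-2\varepsilon)>\delta$ and an integer $N>4\Delta/\varepsilon$. Working from the coarsest scale down, I would choose positive numbers $\delta^{(1)}>\dots>\delta^{(N)}$ (all small), indices $n_1,\dots,n_N$, and finite positive measures $\nu^{(j)}$ obtained from $\mu_{n_j}$ by keeping only finitely many of its atoms, so that $\supp\nu^{(j)}\subset(-\delta^{(j)},\delta^{(j)})\setminus\{0\}$, $|\nu^{(j)}|>1-2\varepsilon$ and $\nu^{(j)}\le\mu_{n_j}$; condition \e{1} makes this possible once $n_j$ is taken large enough that $\mu_{n_j}((-\delta^{(j)},\delta^{(j)}))>1-\varepsilon$. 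Applying \lem{L4} to $\nu^{(j)}$ — in the version valid for measures with atoms on both sides of $0$, which needs only cosmetic changes since $\{t/\lambda\}\in(\varepsilon,1-\varepsilon)$ depends on $|t|$ only and the ambient interval $(-x_l,x_l)$ in \lem{L2} may be enlarged to $(-2x_l,x_l)$ — I get disjoint finite sets $E^{(j)},G^{(j)}\subset(-2\delta^{(j)},2\delta^{(j)})$ with
$$\#E^{(j)}>\tfrac{\varepsilon}{4}\#G^{(j)},\qquad S_{\nu^{(j)}}\ZI_{G^{(j)}}(x)>(1-3\varepsilon)|\nu^{(j)}|>\delta\quad(x\in E^{(j)}).$$
After $E^{(j)},G^{(j)}$ are fixed I then choose $\delta^{(j+1)}$ so small that $\max(E^{(j+1)}\cup G^{(j+1)})\le\tfrac14\,\d(E^{(j)}\cup G^{(j)})$, which makes \lem{L5} applicable to the sets $A_k=E^{(k)}\cup G^{(k)}$: every sum $z_1+\dots+z_N$ with $z_i\in E^{(i)}\cup G^{(i)}$ then uniquely determines $z_1,\dots,z_N$. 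Let $\sigma>0$ be a lower bound for the distances between distinct such sums and fix $0<\rho<\sigma/2$.

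Now put
$$A=\Big(\sum_{i=1}^{N}G^{(i)}\Big)+(-\rho,\rho),\qquad
F=\bigcup_{j=1}^{N}\Big(E^{(j)}+\sum_{i\ne j}G^{(i)}\Big)+(-\rho,\rho).$$
Since the underlying point‑sums are $\sigma$‑separated and $2\rho<\sigma$, the intervals composing these two sets are pairwise disjoint, so $|A|=2\rho\prod_i\#G^{(i)}>0$ while $|F|=2\rho\sum_j\#E^{(j)}\prod_{i\ne j}\#G^{(i)}>2\rho\cdot\tfrac{\varepsilon N}{4}\prod_i\#G^{(i)}$; hence $|F|>\tfrac{\varepsilon N}{4}|A|>\Delta|A|$. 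It remains to see $F\subset\{x:\sup_nS_{\mu_n}\ZI_A(x)>\delta\}$. Take $x\in F$, say $x=e_j+\sum_{i\ne j}g_i+s$ with $e_j\in E^{(j)}$, $g_i\in G^{(i)}$, $|s|<\rho$. Whenever an atom $x^{(j)}_k$ of $\nu^{(j)}$ satisfies $e_j+x^{(j)}_k\in G^{(j)}$, the point $x+x^{(j)}_k=(e_j+x^{(j)}_k)+\sum_{i\ne j}g_i+s$ lies within $\rho$ of an element of $\sum_iG^{(i)}$, hence in $A$; summing the corresponding masses and using \lem{L4},
$$S_{\mu_{n_j}}\ZI_A(x)\ \ge\ S_{\nu^{(j)}}\ZI_A(x)\ \ge\ S_{\nu^{(j)}}\ZI_{G^{(j)}}(e_j)\ >\ \delta,$$
so $\sup_nS_{\mu_n}\ZI_A(x)>\delta$, and $A$ witnesses \e{d1}.

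The real content, and the step I expect to be hardest to hit upon, is the choice of the combinatorial object: the \emph{single} sumset $A=\sum_jG^{(j)}$ shared across all scales, together with the level set assembled from the $N$ ``one‑$E$'' sums $E^{(j)}+\sum_{i\ne j}G^{(i)}$. This is precisely what turns the apparent loss $\#E^{(j)}\approx\tfrac{\varepsilon}{4}\#G^{(j)}$ in \lem{L4} into a gain proportional to the number of scales $N$. The remaining ingredients — the extension of \lem{L1}--\lem{L4} to two‑sided measures, the truncation step, the use of \lem{L5} to rule out collisions in the sumset, and checking that thickening by $\rho<\sigma/2$ neither destroys nor creates incidences — are routine.
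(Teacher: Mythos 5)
Your construction is essentially the paper's own proof: \lem{L4} applied at indices chosen so that the supports shrink fast enough for \lem{L5} to apply to $E\cup G$ at consecutive scales, the sumset $G_{n_1}+\dots+G_{n_m}$ together with the union of the one-$E$ sums $E_{n_k}+\sum_{i\ne k}G_{n_i}$, the multiplicative count turning the per-scale loss into a gain proportional to the number of scales, and the final thickening by intervals of half the minimal gap. The only differences are bookkeeping — your explicit truncation to finite $\nu^{(j)}\le\mu_{n_j}$ with $|\nu^{(j)}|>1-2\varepsilon$ and the two-sided variant of Lemmas \ref{L2}--\ref{L4}, which the paper instead absorbs into its brief reduction to measures with finitely many atoms $x_i\searrow 0$ — so the argument is correct and the same in substance.
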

\begin{proof}
It is easy to observe that can be supposed each $\supp \mu_n$ is a finite set and moreover
\md0
\mu_n=\sum_{i=l(n-1)+1}^{l(n)} m_i\delta_{x_i},\quad n=1,2,\ldots,
\emd
where $0=l(0)<l(1)<l(2)<\ldots$ are integers, $1>x_i\searrow 0$ and $m_i>0$, $i=1,2,\ldots $. Applying \lem{L4} with $\varepsilon=(1-\delta)/3$  we define finite sets  $E_n$ and $G_n$ with
\md3
&E_n,\,G_n\subset (-x_{l(n)},x_{l(n)}),\quad E_n\cap G_n=\varnothing,\label{d2}\\
&\#(E_n)> \frac{(1-\delta)\#(G_n)}{12},\label{d3}\\
&S_{\mu_n}\ZI_{G_n}(x)>\delta,\quad x\in E_n.\label{d4}
\emd
Clearly we can chose a sequence of integers $n_k$, $k=1,2,\ldots $, satisfying
\md1\label{c4}
\max(E_{n_{k+1}}\cap G_{n_{k+1}})< \frac{\d(E_{n_k}\cap G_{n_k})}{4},\quad k=1,2,\ldots.
\emd
So the sequence of sets $A_k=E_{n_k}\cup G_{n_k}$ satisfies the condition \e{f1}.
Fix an integer
\md1\label{c4}
m>\frac{12\Delta}{1-\delta},
\emd
and denote
\md3
&G=G_{n_1}+G_{n_2}+\ldots+G_{n_m},\label{c5}\\
&F_k=\sum_{i\neq k}G_{n_i}+E_{n_k},\quad E=\cup_{i=1}^nF_i.\label{c6}
\emd
Notice that the sets $F_k$ are mutually disjoint. Indeed, suppose to the contrary $F_p\cap F_q\neq \varnothing$, $p\neq q$, and $x\in F_p\cap F_q$. We then have
\md2
&x=x_1+\ldots+x_m=y_1+\ldots+y_m,\hbox { where }\\
&x_i,y_i\in A_i,\quad x_p\in E_{n_p},\, y_p\in G_{n_p},
\emd
Since $G_{n_p}\cap E_{n_p}=\varnothing$ (see \e{d2}), we have $x_{n_p}\neq y_{n_p}$. On the other hand because $x_i,y_i\in A_i$ and the family $A_i$ satisfies the hypothesis of \lem{L5} we get $x_i=y_i$ for all $i=1,2,\ldots ,m$. This is a contradiction and so $F_k$ are mutually disjoint. Similarly we can prove that any point $x\in G$ has unique representation
\md0
x=x_1+\ldots+x_m,\quad x_i\in G_{n_i},\,i=1,2,\ldots ,m.
\emd
This implies
\md0
\#G=\prod_{i=1}^m\#(G_{n_i}).
\emd
By the same argument, using \e{d3}, we get
\md0
\#F_k=\prod_{i\neq k}\#(G_{n_i})\cdot \#(E_{n_k})\ge \prod_{i\neq k}\#(G_{n_i})\cdot\frac{(1-\delta)\#(G_{n_k})}{12}=\frac{(1-\delta)\# G}{12}.
\emd
Combining this and \e{c4} we conclude
\md1\label{c7}
\#E=\sum_{k=1}^m\#F_k>\frac{m(1-\delta)\#G}{12}>\Delta\cdot \#G.
\emd
To prove \e{d1}, we take an arbitrary $x\in E$. We have $x\in F_k$ for some $1\le k\le m$ and so
\md0
x=x_1+\ldots+x_m,\quad x_i\in G_{n_i},\, i\neq k,\,x_k\in E_{n_k}.
\emd
From \e{c5} it follows that $G_{n_k}\subset G-\sum_{i\neq k}x_i$. Therefore, by \e{d4}, we get
\md0
S_{\mu_{n_k}}\ZI_G(x)=S_{\mu_{n_k}}\ZI_{G-\sum_{i\neq k}x_i}(x_k)\ge S_{\mu_{n_k}}\ZI_{G_{n_k}}(x_k)>\delta.
\emd
Hence we have
\md1\label{c8}
\sup_kS_{\mu_{n_k}}\ZI_G(x)>\delta,\quad x\in E,
\emd
Finally we let $\varepsilon=\d(G\cup E)/2$ and denote
\md0
A=G+(-\varepsilon,\varepsilon),\quad B=E+(-\varepsilon,\varepsilon).
\emd
It is clear that the intervals $t+(-\varepsilon,\varepsilon)$, $t\in G\cup E$, are pairwise disjoint. Hence
\md0
|A|=2\varepsilon\#G,\quad |B|=2\varepsilon\#E,
\emd
and so, by \e{c7} we conclude
\md1\label{c9}
|B|>\Delta |A|.
\emd
Then for an arbitrary $x\in B$ we have $x=t+y$ where $t\in E$ and $|y|<\varepsilon$. Hence, using \e{c8}, we get
\md1\label{c10}
\sup_kS_{\mu_{n_k}}\ZI_A(x)\ge \sup_kS_{\mu_{n_k}}\ZI_{G+y}(x)=\sup_kS_{\mu_{n_k}}\ZI_G(t)>\delta,\quad x\in B.
\emd
Collecting \e{c9} and \e{c10} we obtain \e{d1}.
Lemma is proved.
\end{proof}
\begin{definition}
A sequence of linear operators
\md0
U_n:L^1(\ZT)\to \{\hbox { measurable functions on } \ZT \}.
\emd
is said to be strong sweeping out, if given $\varepsilon >0$ there is a set $E$ with $mE<\varepsilon $ such that $\limsup_{n\to\infty} U_n\ZI_E(x)=1$
and $\liminf_{n\to\infty} U_n\ZI_E(x)=0$ a.e..
\end{definition}
To prove the theorem we need to show that the sequence $S_{\mu_n}$ is strong sweeping out. The following theorem gives a sufficient condition for a sequence of operators to be strong sweeping out.
\begin{theorem}[\cite{Kar}, \S 7, Theorem 6]
 If the sequence of positive translation invariant operators $U_n$ satisfies the conditions
\begin{description}
    \item[a] $U_n(\ZI_\ZT)\to 1$ as $n\to\infty$,
    \item[b] for any $\varepsilon >0$ and $n\in\ZN$ there exists a number $\delta=\delta(\varepsilon,n) >0$,
    such that if $G\subset \ZT$ and $m(G)<\delta$ then
\md1\label{g1}
m\{x\in \ZT:\, U_n\ZI_G(x)>\varepsilon \}<\varepsilon ,
\emd
\item[c] for any $0<\delta<1$ we have
\md0
\sup_{G\subset \ZT,\,|G|>0}\frac{|\{x\in X:\, \sup_{n\in \ZN} U_n\ZI_G(x)\ge\delta \}|}{|G|}=\infty .
\emd
\end{description}
then it is strong sweeping out.
\end{theorem}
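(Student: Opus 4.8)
The plan is to deduce strong sweeping out by a Baire category argument: I will show that the set of $E$ witnessing the required behaviour is residual inside a suitable complete metric space, and I will feed that argument with a single ``building block'' lemma extracted from hypothesis \textbf{c}. Throughout I use two consequences of positivity and translation invariance. First, since $U_n$ commutes with translations, the bounded positive linear functional $f\mapsto \int_\ZT U_nf\,dm$ is translation invariant, hence equals $c_n\int_\ZT f\,dm$ with $c_n=U_n\ZI_\ZT$ a constant that tends to $1$ by \textbf{a}; in particular $\int_\ZT U_n\ZI_E\,dm=c_n\,m(E)$. Second, positivity gives monotonicity: $A\subset B$ implies $U_n\ZI_A\le U_n\ZI_B$, so enlarging a set can only increase every $U_n\ZI_E$.

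Before the category argument I would prove the building block lemma: for every $\alpha\in(0,1)$, every $\eta>0$ and every $N_0\in\ZN$ there is a finite set $W\subset\ZT$ with $m(W)<\eta$ and an integer $N>N_0$ such that
\[
m\bigl\{x:\ \max_{N_0<n\le N}U_n\ZI_W(x)>\alpha\bigr\}>1-\eta .
\]
To build $W$ I start from \textbf{c}, which (after decreasing the threshold slightly and passing from the supremum to a finite maximum through an increasing union) yields, for an arbitrarily large ratio $R$, a set $G$ with $m(B)>R\,m(G)$, where $B=\{x:\max_{n\le N'}U_n\ZI_G(x)>\alpha\}$. By translation invariance the translate $G+t$ has witness set $B+t$, so finitely many translates $t_1,\dots,t_J$ with $J\asymp 1/m(B)$ cover $\ZT$ up to measure $\eta/2$; put $W=\bigcup_j(G+t_j)$, whose measure is at most $J\,m(G)\lesssim 1/R<\eta$ once $R$ is large. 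Monotonicity gives $\max_{n\le N'}U_n\ZI_W>\alpha$ on each $B+t_j$, hence on a set of measure $>1-\eta/2$. To push the witnessing index past $N_0$ I shrink $W$ further using \textbf{b}: choosing $m(W)$ small enough that $m\{U_n\ZI_W>\alpha\}<\eta/(2N_0)$ for each of the finitely many $n\le N_0$ removes the small-index contribution, leaving the maximum over $N_0<n\le N$ still $>\alpha$ on measure $>1-\eta$.

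With this lemma I would run the category argument in the complete metric space $\mathcal S_\varepsilon=\{E\subset\ZT:\ m(E)\le\varepsilon\}$ with $d(A,B)=m(A\triangle B)$ (completeness because $m$ is $d$-continuous). For the $\limsup$ half, define for $k,N_0,p\in\ZN$ the event $O_{k,N_0,p}$ of sets $E$ for which $\max_{N_0<n\le N}U_n\ZI_E>1-1/k$ on measure $>1-1/p$ for some finite $N$. Each $O_{k,N_0,p}$ is open: by \textbf{b} and positivity, $|U_n\ZI_E-U_n\ZI_{E'}|\le U_n\ZI_{E\triangle E'}$ is small off a small set whenever $d(E,E')$ is small, so the measure of the relevant superlevel set is lower semicontinuous. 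It is dense: given $E_0$ and $\rho>0$ the building block lemma supplies a finite $W$ with $m(W)<\min(\rho,\varepsilon-m(E_0))$ so that $E_0\cup W\in O_{k,N_0,p}$, by monotonicity. Hence $\bigcap_{k,N_0,p}O_{k,N_0,p}$ is residual, and by the trivial bound $U_n\ZI_E\le U_n\ZI_\ZT\to1$ every $E$ in it satisfies $\limsup_n U_n\ZI_E=1$ a.e.

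The main obstacle is the $\liminf=0$ half, where the geometric content of \textbf{c} must be used rather than only the integral identity. Indeed Fatou gives only $\int_\ZT\liminf_nU_n\ZI_E\,dm\le m(E)\le\varepsilon$, i.e.\ $\liminf_nU_n\ZI_E<\beta$ on a set of measure $\ge 1-\varepsilon/\beta$, which is \emph{not} the a.e.\ statement for fixed $\varepsilon$. The key is that \textbf{c} forbids the measures defining $U_n$ from spreading out: a uniformly bounded density would force $U_n\ZI_G\le K\,m(G)<\alpha$ for small $G$ and destroy the blow-up. I would quantify this to extract a lacunary sequence of highly concentrated scales $n_s\to\infty$ at which $U_{n_s}\ZI_E$ is genuinely small off a set of measure $O(\varepsilon)$; the identity $\int U_{n_s}\ZI_E=c_{n_s}m(E)$ then forces the value near $0$ on the complement of the small concentration region, and the ``valley'' events $\{U_{n_s}\ZI_E<1/k\}$ become asymptotically independent across the well-separated scales, the independence coming from the scale separation enforced by \textbf{b}. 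The second Borel--Cantelli lemma puts a.e.\ point in infinitely many valleys, giving $\liminf_nU_n\ZI_E=0$ a.e. Packaging this as density and openness of the analogous events $P_{k,N_0,p}$ in $\mathcal S_\varepsilon$, the residual intersection of all the $O$'s and $P$'s is nonempty, and any $E$ in it has $m(E)\le\varepsilon$ together with both relations in (\ref{3}); as $\varepsilon$ is arbitrary, this is exactly the strong sweeping out property.
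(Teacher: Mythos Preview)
The paper does not prove this statement; it is quoted verbatim from \cite{Kar} and used as a black box, so there is no in-paper argument to compare against. Your $\limsup$ half is essentially sound: the building-block lemma follows from \textbf{c}, translation invariance, and \textbf{b} as you outline, and the Baire category machinery with the perturbation estimate $|U_n\ZI_E-U_n\ZI_{E'}|\le U_n\ZI_{E\triangle E'}$ coming from positivity and \textbf{b} gives openness and density of the $O_{k,N_0,p}$.

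The genuine gap is the $\liminf$ half. You correctly identify that the integral identity $\int U_n\ZI_E = c_n\,m(E)$ together with Markov only gives $m\{U_n\ZI_E\ge 1/k\}\le k\,c_n\,\varepsilon$, which for fixed $\varepsilon$ does \emph{not} force $U_n\ZI_E<1/k$ on a set of measure $>1-1/p$ once $k,p$ are large. Your proposed escape---``asymptotic independence'' of the valley events $\{U_{n_s}\ZI_E<1/k\}$ along a lacunary subsequence, justified by ``scale separation enforced by \textbf{b}''---is not established and, as stated, cannot be: hypothesis \textbf{b} is a continuity-at-zero statement about each individual $U_n$ and says nothing about correlations between $U_{n_s}\ZI_E$ and $U_{n_{s'}}\ZI_E$ for a \emph{fixed} set $E$. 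Without that independence the second Borel--Cantelli step has no input. In the arguments of \cite{Kar} (and in related work such as \cite{Akc1,Akc2}) the $\liminf$ conclusion is not obtained by a generic category argument on a single parameter $E$; rather one constructs $E$ explicitly as a carefully nested union of the building-block sets $W_j$ at widely separated index windows, so that at the indices witnessing $W_{j+1}$ the earlier pieces $W_1,\dots,W_j$ are already negligible by \textbf{b} \emph{and} the later pieces have not yet been added---this is what simultaneously produces the peaks and the valleys. Your sketch does not supply this mechanism, so the $\liminf=0$ a.e.\ conclusion is unproved.
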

Observe, that each $S_{\mu_n}$ is positive translation invariant. The conditions $({\bf a})$ follows from \e{1}. To show $({\bf b})$ we simply note
\md0
\int_\ZT S_{\mu_n}\ZI_G(x)dx=\int_\ZT \int_\ZT\ZI_G(x+t)dtdx=|\mu_n|\cdot |G|,
\emd
and therefore, by Chebishev inequality, we will have \e{g1} provided $|G|<\delta=|\mu_n|/\varepsilon$. The condition $({\bf c})$ immediately follows from \lem{L6}. Theorem is proved
\end{section}

\end{document}